\theoremstyle{plain}
\numberwithin{equation}{section}
\newtheorem{theorem}{Theorem}
\newtheorem{proposition}{Proposition}
\newtheorem{definition}{Definition}
\theoremstyle{remark}
\newtheorem{remark}{Remark}
\newtheorem{example}{Example}
\renewcommand{\epsilon}{\varepsilon}
\renewcommand{\phi}{\varphi}
\DeclareMathOperator{\Ima}{Im}
\DeclareMathOperator{\Ker}{Ker}
\def\Id{\text{\rm Id}}
\begin{document}
\title[$h$-dichotomies for evolution families]{$h$-dichotomies via  noncritical uniformity and expansiveness for evolution families}
\author[Davor Dragi\v cevi\' c]{Davor Dragi\v cevi\' c } 
\address{Department of Mathematics, University of Rijeka, Croatia}
\email[Davor Dragi\v cevi\' c]{ddragicevic@math.uniri.hr}
\maketitle

\begin{abstract}
In a recent paper (Math. Ann. 393 (2025), 1769--1795), Elorreaga et al. have obtained a complete characterization of the notion of a $h$-dichotomy for ordinary differential equations on a finite-dimensional space in terms of the notions of $h$-expansiveness and $h$-noncriticality. Their results extended the previous results of Coppel and Palmer, which dealt with exponential dichotomies. The main objective of this note is to extend the results of Elorreaga et al. to arbitrary invertible evolution families that act on Banach spaces. We emphasize that our approach is completely different and considerably simpler from the one developed by Elorreaga et al. It is based on the time-rescaling method introduced by Dragi\v cevi\' c and Silva.
\end{abstract}

\section{Introduction}
The notion of an exponential dichotomy (essentially introduced by Perron~\cite{Per}) plays a fundamental role in the qualitative theory of \emph{nonautonomous} differential equations and dynamical systems. It can be described as a nonautonomous counterpart to the classical notion of hyperbolicity as it requires that the phase space of a linear dynamics splits (at every moment of time) into two directions: the stable and the unstable direction. Along the stable direction,  dynamics exhibits exponential contraction forward in time, while along the unstable direction it exhibits the same property backward in time (which corresponds to the exponential expansiveness forward in time). We refer to the important monographs~\cite{CL,Coppel,DK,Henry,Pot,SY-book-2002} for a detailed exposition of various aspects of the theory of exponential dichotomies, including many applications.

Despite its importance, due to the flexibility of nonautonomous dynamics, it is fairly easy to construct broad classes of dynamics which exhibit behavior similar to exponential dichotomies but where the rates of contraction (resp. expansion) along stable (resp. unstable) directions are not necessarily exponential. To our knowledge, Martin Jr.~\cite{Martin}, Muldowney~\cite{M} and Naulin and Pinto~\cite{NP} were the first to systematically study such generalized dichotomies, in which the rates of contraction and expansion are prescribed by some general functions (growth rates).

In their recent paper~\cite{EPR} (for related earlier work see~\cite{WXZ}),   the authors have obtained important new characterizations of the notion of a (uniform) $h$-dichotomy for nonautonomous ordinary differential equations (which is a special case of a more general notion of the $(h,k)$-dichotomy introduced in~\cite{NP}). These characterizations are given in terms of the newly introduced notions of uniform $h$-noncriticality and $h$-expansiveness, motivated by the notions of uniform noncriticality and exponential expansiveness studied by Coppel~\cite{Coppel} and Palmer~\cite{Palmer}. In particular, the results from~\cite{EPR} extend some of those obtained by Palmer~\cite{Palmer} from exponential to general $h$-dichotomies. Their approach relied on the idea that to any $h$-dichotomy we can associate a totally ordered topological group. This idea first appeared in~\cite{PV}.

The main objective of the present paper is to obtain versions of the results of~\cite{EPR} for general invertible evolution families that act on Banach spaces. Although it is likely that one can adapt the approach from~\cite{EPR} to our setting, we prefer to use a different approach, which relies on the so-called time-rescaling. The basic idea is that the existence of an $h$-dichotomy of some evolution family $T(t,s)$ is equivalent to the existence of an exponential dichotomy of a new evolution family $T_h(t,s)$. This technique was developed in~\cite{DS} (building on the earlier work~\cite{DSS} for polynomial dichotomies) for the case of discrete time, while in the present paper we developed it for continuous time. We stress that some aspects of this approach were hidden in the arguments of previous works (see, for example,~\cite{DLP}) but without a systematic treatment. 

The use of the approach described in the previous paragraph enables us to deduce facts about $h$-dichotomies directly from those about exponential dichotomies. 

The paper is organized as follows. In Section~\ref{s-p}, we recall the basic notions that will appear throughout the paper. In Section~\ref{s-r}, we explore the connection between exponential and $h$-dichotomies. Afterwards, in Section~\ref{s-nn} we introduce the concepts of uniform $h$-noncriticality and $h$-expansiveness for an arbitrary invertible evolution family $T(t,s)$ and note that these can also be characterized in terms of uniform noncriticality and exponential expansiveness of $T_h(t,s)$. Finally,  in Section~\ref{s-mr} we obtain the main results of this paper. Firstly, we obtain the version of~\cite[Theorem 1]{Palmer} for invertible evolution families on Banach spaces (see Theorem~\ref{MT-1}) providing characterizations of exponential dichotomies. Using the other results of our paper, we then obtain the version of Theorem~\ref{MT-1} for $h$-dichotomies as a simple corollary (see Theorem~\ref{MT2}).

\section{Preliminaries}\label{s-p}

Throughout this note $X=(X, \|\cdot \|)$ will be a Banach space. By $\mathcal B(X)$ we denote the space of all bounded linear operators on $X$ equipped with the operator norm, which we also denote by $\| \cdot \|$.

We begin by recalling the notion of an evolution family.
\begin{definition}
Let $a_0\in \mathbb R\cup \{-\infty\}$. We say that $\mathcal T=\{T(t, s): \ t\ge s> a_0\}\subset \mathcal B(X)$ is an \emph{evolution family} if the following holds:
\begin{itemize}
\item $T(t, t)=\Id$ for $t> a_0$, where $\Id$ denotes the identity operator on $X$;
\item for $t\ge s\ge r> a_0$,
\[
T(t, s)T(s, r)=T(t, r);
\]
\item for $s>a_0$ and $v\in X$, $t\mapsto T(t, s)v$ is continuous on $[s, \infty)$.
 \end{itemize}
In addition, if $T(t, s)$ is an invertible operator for each $t\ge s> a_0$, we say that $\mathcal T$ is an \emph{invertible} evolution family.
\end{definition}

\begin{remark}\label{rem1}
Let $\mathcal T=\{T(t, s): \ t\ge s> a_0\}\subset \mathcal B(X)$ be an invertible evolution family. We can define $T(t, s)$ for $a_0< t<s$ by
\[
T(t, s):=T(s, t)^{-1}.
\]
Thus, in this case, we can view $\mathcal T$ as a family $\{T(t, s): \ t, s> a_0\}$.
\end{remark}

\begin{definition}
A \emph{growth rate} is any bijective increasing map $h\colon (a_0, \infty)\to (0, \infty)$, where $a_0\in \mathbb R\cup \{-\infty\}$.
\end{definition}

\begin{definition}
Let $h\colon (a_0, \infty)\to (0, \infty)$ be a growth rate and $\mathcal T=\{T(t, s): \ t\ge s> a_0\}\subset \mathcal B(X)$ an evolution family. We say that $\mathcal T$ exhibits \emph{$h$-bounded growth} on an interval $J\subset (a_0, \infty)$ if there exist $K, \mu>0$ such that 
\begin{equation}\label{hbg}
\|T(t, s)\| \le K\left (\frac{h(t)}{h(s)}\right )^\mu, \quad \text{for $t, s\in J$ with $t\ge s$.}
\end{equation}
\end{definition}

\begin{definition}
Let $h\colon (a_0, \infty)\to (0, \infty)$ be a growth rate and $\mathcal T=\{T(t, s): \ t\ge s> a_0\}\subset \mathcal B(X)$ an invertible evolution family. We say that $\mathcal T$ exhibits \emph{$h$-bounded decay} on an interval $J\subset (a_0, \infty)$ if there exist $K, \mu>0$ such that 
\begin{equation}\label{hbd}
\|T(t, s)\| \le K\left (\frac{h(s)}{h(t)}\right )^\mu, \quad \text{for $t, s\in J$ with $t\le s$.}
\end{equation}
\end{definition}

\begin{remark}
In the particular case where $h(t)=e^t$, the notion of an $h$-bounded growth coincides with the classical notion of a bounded growth (see~\cite[Definition 2.1.(iv)]{SS}). We note that this requirement is equivalent to that in~\cite[p.334]{MRS} which requires that there are $K\ge 0$ and $\mu \in \mathbb R$ such that~\eqref{hbg} holds, as for $\mu\le 0$ we have $e^{\mu (t-s)}\le 1$ for $t\ge s$.

In addition, in the case where $h(t)=e^t$, $h$-bounded decay will be called bounded decay.
\end{remark}

\begin{remark}
\begin{enumerate}
\item Assume that an evolution family $\mathcal T=\{T(t, s): \ t\ge s> a_0\}\subset \mathcal B(X)$ is generated by a nonautonomous linear equation
\begin{equation}\label{LDE}
    x'=A(t)x \quad t>a_0,
\end{equation}
where $A\colon (a_0, \infty)\to \mathcal B(X)$ is  a continuous map. In this case, \eqref{hbg} implies that
\begin{equation}\label{1546}
\|x(t)\|\le K\left (\frac{h(t)}{h(s)}\right )^\mu \|x(s)\|, \quad \text{for $t, s\in J$ with $t\ge s$,}
\end{equation}
where $t\mapsto x(t)$ is any solution of~\eqref{LDE}. This follows immediately, taking into account that $T(t, s)x(s)=x(t)$. Conversely, take any $s\in J$, $v\in X$ and let $x\colon (a_0, \infty)\to X$ be the solution of~\eqref{LDE} with $x(s)=v$.
From~\eqref{1546} we obtain
\[
\|T(t, s)v\|\le K\left (\frac{h(t)}{h(s)}\right )^\mu \|v\|, \quad \text{for $t\in J$ with $t\ge s$.}
\]
Since $v$ and $s$ were arbitrary, we conclude that~\eqref{hbg} holds.
The same discussion applies in relation to~\eqref{hbd}.
\item We note that an evolution family  $\mathcal T=\{T(t, s): \ t\ge s> a_0\}\subset \mathcal B(X)$ exhibits bounded growth if and only if there exist $C, T>0$ such that 
\begin{equation}\label{hbg-alt}
\|T(t, s)\| \le C, \quad \text{for $t, s\in J$ with $t\in [s, s+T]$.}
\end{equation}
Indeed, it follows from~\eqref{hbg} that~\eqref{hbg-alt} holds with $T=1$ and $C=Ke^\mu$. Conversely, suppose that~\eqref{hbg-alt} holds with $C>1$ (we can increase $C$ to achieve this) and take arbitrary $t, s\in J$ with $t\ge s$. Let $n\in \mathbb N_0$ be such that 
$t-s=nT+r$ with $0\le r<T$. Then
\[
\begin{split}
\|T(t, s)\|&=\|T(s+nT+r, s)\|\\
&=\|T(s+nT+r, s+nT)T(s+nT, s)\|\\
&\le C\|T(s+nT, s)\| \\
&\le C^{n+1}\\
&=e^{\frac{t-s+T-r}{T}\ln C}\\
&\le Ce^{(t-s)\frac{\ln C}{T}},
\end{split}
\]
which implies that~\eqref{hbg} holds with $K=C$ and $\mu=\frac{\ln C}{T}>0$.
\end{enumerate}
\end{remark}

We introduce the notion of a $h$-dichotomy for evolution families.
\begin{definition}\label{dich}
Let $h\colon (a_0, \infty)\to (0, \infty)$ be a growth rate and $\mathcal T=\{T(t, s): \ t\ge s>a_0\}\subset \mathcal B(X)$ an evolution family. We say that $\mathcal T$ admits an \emph{$h$-dichotomy} on an interval $J\subset (a_0, \infty)$ if there is a family $\{P(t): \ t\in J\}$ of bounded projections on $X$ and constants $D, \lambda >0$ such that:
\begin{enumerate}
\item for $t, s\in J$ with $t\ge s$,
\begin{equation}\label{invariance}
P(t)T(t, s)=T(t, s)P(s),
\end{equation}
and \begin{equation}\label{inv}T(t, s)\rvert_{\Ker P(s)}\colon \Ker P(s)\to \Ker P(t) \quad  \text{is invertible};\end{equation}
\item for $t, s\in J$ with $t\ge s$,
\begin{equation}\label{hd1}
\|T(t, s)P(s)\| \le D\left (\frac{h(t)}{h(s)}\right )^{-\lambda};
\end{equation}
\item for $t, s\in J$ with $t\le s$,
\begin{equation}\label{hd2}
\|T(t, s)(\Id-P(s))\| \le D\left (\frac{h(s)}{h(t)}\right)^{-\lambda},
\end{equation}
where 
\[
T(t, s):=\left (T(s, t)\rvert_{\Ker P(t)}\right )^{-1}\colon \Ker P(s)\to \Ker P(t).
\]
\end{enumerate}
\end{definition}
\begin{remark}
In the particular case where $h(t)=e^t$, the notion of a $h$-dichotomy coincides with the notion of an exponential dichotomy as introduced by Henry (see~\cite[Definition 7.6.1]{Henry}).
\end{remark}

\section{Time-rescaling}\label{s-r}
Throughout this section, we take a growth rate $h\colon (a_0, \infty)\to (0, \infty)$ and an evolution family
$\mathcal T=\{T(t, s): \ t\ge s>a_0\} \subset \mathcal B(X)$.  Set
\[
T_h(t, s):=T(h^{-1}(e^t), h^{-1}(e^s)), \quad t, s\in \mathbb R, \ t\ge s.
\]
\begin{remark}
Observe that $e^s>0$ for any $s\in \mathbb R$. Consequently, for $s\in \mathbb R$, $e^s$ belongs to the domain of $h^{-1}\colon (0, \infty)\to (a_0, \infty)$ and $h^{-1}(e^s)>a_0$. Therefore, since $h^{-1}$ is increasing we have $h^{-1}(e^t)\ge h^{-1}(e^s)>a_0$ for any $t, s\in \mathbb R$ with $t\ge s$. We conclude that $T_h(t, s)$ is well-defined.
\end{remark}
We then have the following simple observation.
\begin{proposition}
$\mathcal T^h:=\{T_h(t, s): \ t\ge s>-\infty\}$ is an evolution family.
\end{proposition}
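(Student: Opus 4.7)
The plan is to verify the three defining axioms of an evolution family directly from the definition $T_h(t,s)=T(h^{-1}(e^t),h^{-1}(e^s))$, relying on the corresponding properties of $\mathcal T$. Throughout, I would use the convenient abbreviation $\phi(t):=h^{-1}(e^t)$, which is well-defined on all of $\mathbb R$ since $e^t\in(0,\infty)$ and $h^{-1}\colon(0,\infty)\to(a_0,\infty)$ is the inverse of a bijection; moreover $\phi$ is strictly increasing, so $t\ge s$ implies $\phi(t)\ge\phi(s)>a_0$, placing every pair $(\phi(t),\phi(s))$ in the domain of $T$.

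For the first two axioms the argument is purely algebraic. The identity axiom is immediate: $T_h(t,t)=T(\phi(t),\phi(t))=\Id$. The cocycle identity is equally immediate: for $t\ge s\ge r$, setting $u=\phi(t)$, $v=\phi(s)$, $w=\phi(r)$, one has $u\ge v\ge w>a_0$, so the evolution property of $\mathcal T$ gives
\[
T_h(t,s)T_h(s,r)=T(u,v)T(v,w)=T(u,w)=T_h(t,r).
\]

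The only step that needs a tiny observation is strong continuity. I would argue that $\phi$ itself is continuous: $h$ is a strictly increasing bijection between the intervals $(a_0,\infty)$ and $(0,\infty)$, and any such bijection is automatically a homeomorphism (a jump in $h$ would leave a gap in its image, contradicting surjectivity). Hence $h^{-1}$ is continuous, and so is $\phi$. Fix $s\in\mathbb R$ and $v\in X$. For $t\in[s,\infty)$ we have $\phi(t)\in[\phi(s),\infty)\subset(a_0,\infty)$, and the strong continuity of $\mathcal T$ says that the map $\sigma\mapsto T(\sigma,\phi(s))v$ is continuous on $[\phi(s),\infty)$. Composing with the continuous map $t\mapsto\phi(t)$ yields that $t\mapsto T_h(t,s)v=T(\phi(t),\phi(s))v$ is continuous on $[s,\infty)$, as required.

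I do not anticipate any genuine obstacle — the argument is a transparent reparametrisation in the time variable. The single point worth flagging is the continuity of $h^{-1}$, which is not assumed as a hypothesis but follows from the fact that a monotone bijection between real intervals is necessarily a homeomorphism; this is the only place where any care beyond direct substitution is required.
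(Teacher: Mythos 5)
Your proof is correct and follows essentially the same route as the paper: direct verification of the three axioms via the substitution $t\mapsto h^{-1}(e^t)$. The only difference is that you explicitly justify the continuity of $h^{-1}$ (monotone bijections of intervals are homeomorphisms), a point the paper's proof uses implicitly; this is a reasonable bit of extra care but not a different argument.
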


\begin{proof}
For $t\in \mathbb R$, we have 
\[
T_h(t, t)=T(h^{-1}(e^t), h^{-1}(e^t))=\Id.
\]
Moreover, for $t\ge s\ge r$ we have
\[
\begin{split}
T_h(t, r)=T(h^{-1}(e^t), h^{-1}(e^r)) &=T(h^{-1}(e^t), h^{-1}(e^s))T(h^{-1}(e^s), h^{-1}(e^r))\\
&=T_h(t, s)T_h(s, r).
\end{split}
\] Here, we used $h^{-1}(e^t)\ge h^{-1}(e^s)\ge h^{-1}(e^r)$ (as $h^{-1}$ is increasing).

Finally, for all $s\in \mathbb R$ and $v\in X$, the map $t\mapsto T_h(t, s)v$ is continuous on $[s, \infty)$ as it is a composition of continuous maps $[h^{-1}(e^s), \infty)\ni t\mapsto T(t, h^{-1}(e^s))v$ and $t\mapsto h^{-1}(e^t)$.
\end{proof}
\begin{remark}
We notice that $\mathcal T^h$ is invertible provided that $\mathcal T$ is invertible. Indeed, for $t, s\in \mathbb R$ with $t\ge s$, the inverse of $T_h(t, s)$ is $T(h^{-1}(e^s), h^{-1}(e^t))$ (see Remark~\ref{rem1}).
\end{remark}

\begin{remark}
An analogous construction to that of $\mathcal T^h$ has been performed in the case of discrete time by Dragi\v cevi\' c and Silva~\cite[Eq.(5)]{DS} (building on the work~\cite{DSS} for polynomial dichotomies), and the same idea is briefly outlined in the work by Pe$\tilde{\text{n}}$a and  Rivera Villagran~\cite[p.3]{PV}.
\end{remark}
The main motivation for introducing $\mathcal T^h$ is the following result.
\begin{proposition}\label{P2}
Let $J=[a_0^*, \infty)$ for $a_0^*>a_0$.
The following holds:
\begin{enumerate}
\item[(a)] $\mathcal T$ exhibits $h$-bounded growth on $J$ if and only if $\mathcal T^h$ exhibits a bounded growth on $[\ln h(a_0^*), \infty)$;
\item[(b)] $\mathcal T$ admits a $h$-dichotomy on $J$ if and only if $\mathcal T^h$ admits an exponential dichotomy on $[\ln h(a_0^*), \infty)$.
\end{enumerate}
\end{proposition}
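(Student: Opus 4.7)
The plan is to reduce both statements to a single change-of-variables identity. The map $\phi\colon [\ln h(a_0^*),\infty)\to J=[a_0^*,\infty)$ given by $\phi(t)=h^{-1}(e^t)$ is a bijective increasing correspondence, with inverse $u\mapsto \ln h(u)$, and satisfies $h(\phi(t))=e^t$. By definition $T_h(t,s)=T(\phi(t),\phi(s))$, so any two-variable estimate involving ratios of the form $h(u)/h(v)$ with $u,v\in J$ is equivalent, under $u=\phi(t)$, $v=\phi(s)$, to the same estimate with $e^t/e^s$ and $t,s\in[\ln h(a_0^*),\infty)$.

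For part (a), I would just substitute: the inequality $\|T(u,v)\|\le K(h(u)/h(v))^\mu$ for $u\ge v$ in $J$ becomes $\|T_h(t,s)\|\le K e^{\mu(t-s)}$ for $t\ge s\ge \ln h(a_0^*)$, and the converse implication is identical since $\phi$ is a bijection.

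For part (b), given an $h$-dichotomy of $\mathcal T$ on $J$ with projections $\{P(u):u\in J\}$ and constants $D,\lambda>0$, I would define $\tilde P(t):=P(\phi(t))$ for $t\ge \ln h(a_0^*)$. The commutation $\tilde P(t)T_h(t,s)=T_h(t,s)\tilde P(s)$ is immediate from \eqref{invariance}, and the invertibility of $T_h(t,s)\rvert_{\Ker \tilde P(s)}\colon \Ker \tilde P(s)\to \Ker \tilde P(t)$ for $t\ge s$ is nothing but the invertibility required by \eqref{inv} for $u=\phi(t)\ge v=\phi(s)$; moreover, the extended operator from Definition~\ref{dich} satisfies $T(\phi(t),\phi(s))=T_h(t,s)$ also for $t\le s$, so the two definitions of the inverse branch agree. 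The estimates \eqref{hd1} and \eqref{hd2} then read exactly as $\|T_h(t,s)\tilde P(s)\|\le De^{-\lambda(t-s)}$ for $t\ge s$ and $\|T_h(t,s)(\Id-\tilde P(s))\|\le De^{-\lambda(s-t)}$ for $t\le s$, which is the exponential dichotomy of $\mathcal T^h$. The converse is symmetric: starting from an exponential dichotomy of $\mathcal T^h$ with projections $Q(t)$, set $P(u):=Q(\ln h(u))$ and run the same computation backwards.

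There is no real obstacle; the only point requiring a little care is the compatibility between the two occurrences of the symbol $T(\cdot,\cdot)$ on the kernel (one as a genuine operator, one as the inverse of its restriction), and one should verify that $\phi$ preserves both orderings simultaneously so that the definitions in \eqref{inv} and the displayed equation following \eqref{hd2} transport cleanly to $\mathcal T^h$. Once this bookkeeping is done, everything else is a direct substitution.
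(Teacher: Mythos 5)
Your proposal is correct and follows essentially the same route as the paper: both arguments substitute $u=h^{-1}(e^t)$, define the transported projections $\tilde P(t)=P(h^{-1}(e^t))$, and read off the exponential estimates from the $h$-estimates, with the converse obtained from the identity $T(t,s)=T_h(\ln h(t),\ln h(s))$. Your explicit remark on the compatibility of the two meanings of $T(\cdot,\cdot)$ on $\Ker P$ is a point the paper leaves implicit, but it does not change the argument.
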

\begin{proof}
$(a)$ Suppose that $\mathcal T$ exhibits $h$-bounded growth on $J$ and let $K, \mu>0$ be such that~\eqref{hbg} holds. Then
\[
\|T_h(t, s)\|=\|T(h^{-1}(e^t), h^{-1}(e^s))\|\le K\left (\frac{h(h^{-1}(e^t))}{h(h^{-1}(e^s))}\right )^\mu=K e^{\mu (t-s)}
\]
for $t\ge s\ge \ln h(a_0^*)$, which implies that $h^{-1}(e^t)\ge h^{-1}(e^s)\ge a_0^*$. Hence, $\mathcal T^h$ exhibits a bounded growth on $[\ln h(a_0^*), \infty)$. The converse implication can be obtained in an analogous manner, noting that 
\begin{equation}\label{identity}
T(t, s)=T_h(\ln h(t), \ln h(s)), \quad t\ge s>a_0.
\end{equation}

$(b)$ Suppose that $\mathcal T$ admits a $h$-dichotomy on $J$ and let $P(t)$, $t\in J$ and $D, \lambda >0$ be as in Definition~\ref{dich}. Set
\[
\tilde P(t):=P(h^{-1}(e^t)), \quad t\ge \ln h(a_0^*).
\]
We first observe that
\[
\begin{split}
\tilde P(t)T_h(t, s) &=P(h^{-1}(e^t))T(h^{-1}(e^t), h^{-1}(e^s)) \\
&=T(h^{-1}(e^t), h^{-1}(e^s))P(h^{-1}(e^s))\\
&=T_h(t, s)\tilde P(s),
\end{split}
\]
for $t\ge s\ge \ln h (a_0^*)$. In addition, \[T_h(t, s)\rvert_{\Ker \tilde P(s)}=T(h^{-1}(e^t), h^{-1}(e^s))\rvert_{\Ker P(h^{-1}(e^s))}\colon \Ker \tilde P(s)\to \Ker \tilde P(t)\]
is invertible.

Secondly, it follows from~\eqref{hd1} that 
\[
\begin{split}
\|T_h(t, s)\tilde P(s)\| &=\|T(h^{-1}(e^t), h^{-1}(e^s))P(h^{-1}(e^s))\|\\
&\le D\left (\frac{h(h^{-1}(e^t))}{h(h^{-1}(e^s))}\right )^{-\lambda}\\
&=De^{-\lambda (t-s)},
\end{split}
\]
for $t\ge s\ge \ln h(a_0^*)$. Similarly, \eqref{hd2} gives
\[
\|T_h(t, s)(\Id-\tilde P(s))\|\le De^{-\lambda (s-t)},
\]
for $\ln h(a_0^*)\le t\le s$. We conclude that $\mathcal T^h$ admits an exponential dichotomy on $[\ln h(a_0^*), \infty)$. 
The converse implication can be obtained similarly by relying on~\eqref{identity}. 
\end{proof}
\begin{remark}\label{decay}
Similarly to the proof of Proposition~\ref{P2}(a), one can show that for an invertible evolution family $\mathcal T$ the following holds: $\mathcal T$ exhibits $h$-bounded decay on $J$ if and only if $\mathcal T^h$ exhibits bounded decay on $[\ln h(a_0^*), \infty)$.
\end{remark}

\section{Noncritical uniformity and expansiveness}\label{s-nn}
We introduce the concept of $h$-expansivity for invertible evolution families. 
\begin{definition}
Let $\mathcal T=\{T(t, s): \ t, s> a_0\}\subset \mathcal B(X)$ be an invertible evolution family and $h\colon (a_0, \infty)\to (0, \infty)$ be a growth rate. We say that $\mathcal T$ is \emph{$h$-expansive} on an interval $J\subset (a_0, \infty)$ if there exist $L, \beta>0$ such that 
\begin{equation}\label{hexp}
\|v\|\le L\left ( \left (\frac{h(t)}{h(a)}\right )^{-\beta}\|T(a, t)v\|+\left (\frac{h(b)}{h(t)}\right )^{-\beta}\|T(b, t)v\|\right ),
\end{equation}
for $v\in X$ and $a\le t\le b$ with $[a, b]\subset J$.
\end{definition}
\begin{remark}
In the case $h(t)=e^t$, we will say that $\mathcal T$ is \emph{exponentially expansive}. For evolution families arising from nonautonomous ordinary differential equations, this definition coincides with~\cite[Definition 5]{Palmer}.
\end{remark}
\begin{proposition}\label{P3}
Let $\mathcal T=\{T(t, s): \ t\ge s>a_0\}\subset \mathcal B(X)$ be an invertible evolution family and $h\colon (a_0, \infty)\to (0, \infty)$ be a growth rate. Then $\mathcal T$ is $h$-expansive on $[a_0^*, \infty)$, $a_0^*>a_0$ if and only if $\mathcal T^h$ is exponentially expansive on $[\ln h(a_0^*), \infty)$.
\end{proposition}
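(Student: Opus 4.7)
The plan is to follow the same time-rescaling strategy used in Proposition~\ref{P2}. The key algebraic observation is that for the growth rate $h$, the map $\phi\colon t\mapsto \ln h(t)$ is a bijective increasing correspondence between $[a_0^*,\infty)$ and $[\ln h(a_0^*),\infty)$, and under this change of variables the identity
\[
T(a,t)=T_h(\ln h(a),\ln h(t)),\quad \frac{h(t)}{h(a)}=e^{\ln h(t)-\ln h(a)}
\]
holds. So I expect the two expansiveness conditions to match term by term once I make this substitution, with the same constants $L$ and $\beta$.

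For the forward direction, I would assume $h$-expansiveness of $\mathcal T$ on $[a_0^*,\infty)$ with constants $L,\beta>0$ as in~\eqref{hexp}. Given any $\tilde a\le \tilde t\le \tilde b$ in $[\ln h(a_0^*),\infty)$, I would set $a=h^{-1}(e^{\tilde a})$, $t=h^{-1}(e^{\tilde t})$, $b=h^{-1}(e^{\tilde b})$, which lie in $[a_0^*,\infty)$ and satisfy $a\le t\le b$ by monotonicity of $h^{-1}$. Applying~\eqref{hexp} to any $v\in X$ and rewriting $h(t)/h(a)=e^{\tilde t-\tilde a}$, $h(b)/h(t)=e^{\tilde b-\tilde t}$, together with $T(a,t)v=T_h(\tilde a,\tilde t)v$ and $T(b,t)v=T_h(\tilde b,\tilde t)v$, yields exactly the exponential expansiveness inequality for $\mathcal T^h$ on $[\ln h(a_0^*),\infty)$ with the same $L,\beta$.

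For the converse, I would use the analog of~\eqref{identity}, namely $T(a,t)=T_h(\ln h(a),\ln h(t))$, and reverse the substitution: given $a\le t\le b$ in $[a_0^*,\infty)$, set $\tilde a=\ln h(a)$, $\tilde t=\ln h(t)$, $\tilde b=\ln h(b)$ (which lie in $[\ln h(a_0^*),\infty)$ with $\tilde a\le \tilde t\le \tilde b$), apply the exponential expansiveness of $\mathcal T^h$, and translate back using the same identities to recover~\eqref{hexp}.

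The argument is essentially a bookkeeping exercise; I do not anticipate any real obstacle. The only point worth being careful about is checking that the map $\phi$ is indeed a bijection of the relevant intervals so that the constants transfer without change, but this follows immediately from the assumption that $h$ is a bijective increasing map.
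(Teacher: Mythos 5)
Your proposal is correct and follows essentially the same route as the paper's proof: the same change of variables $t\mapsto \ln h(t)$, the same use of the identity $T(a,t)=T_h(\ln h(a),\ln h(t))$ for the converse, and the same transfer of the constants $L,\beta$ unchanged. No gaps.
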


\begin{proof}
Suppose that $\mathcal T$ is $h$-expansive on $[a_0^*, \infty)$ and take an arbitrary $[c, d]\subset [\ln h(a_0^*), \infty)$. Let $a_0^*\le a \le b$ be such that $\ln h(a)=c$ and $\ln h(b)=d$. It follows from~\eqref{hexp} that for any $t\in [c, d]$ and $v\in X$ we have
\[
\begin{split}
\|v\| & \le L\bigg ( \left (\frac{h(h^{-1}(e^t))}{h(h^{-1}(e^c))}\right )^{-\beta}\|T(h^{-1}(e^c), h^{-1}(e^t))v\|\\
&\phantom{\le}+\left (\frac{h(h^{-1}(e^d))}{h(h^{-1}(e^t))}\right )^{-\beta}\|T(h^{-1}(e^d), h^{-1}(e^t))v\|\bigg )
\end{split},
\]
as $h^{-1}(e^t)\in [a, b]=[h^{-1}(e^c), h^{-1}(e^d)]\subset [a_0^*, \infty)$. Hence,
\[
\|v\| \le L(e^{-\beta(t-c)}\|T_h(c, t)v\|+e^{-\beta(d-t)}\|T_h(d, t)v\|).
\]
We conclude that $\mathcal T^h$ is exponentially expansive on $[\ln h(a_0^*), \infty)$. The converse implication can be established in an analogous manner on the basis of~\eqref{identity}.

\end{proof}
We now introduce the concept of uniform $h$-noncriticality for evolution families. 
\begin{definition}
Let $\mathcal T=\{T(t, s): \ t\ge s> a_0\}\subset \mathcal B(X)$ be an invertible evolution family and $h\colon (a_0, \infty)\to (0, \infty)$ be a growth rate. We say that $\mathcal T$ is \emph{uniformly $h$-noncritical} on $[a_0^*, \infty)$ for $a_0^*>a_0$ if there exist $\theta \in (0, 1)$ and $C>0$ such that 
\begin{equation}\label{huc}
\|v\|\le \theta \sup\{\|T(u, t)v\|:  |\ln h(u)-\ln h(t)|\le C\},
\end{equation}
for all $v\in X$ and  $t$ such that $h(t)\ge e^{C} h(a_0^*)$.
\end{definition}
\begin{remark}
In the case $h(t)=e^t$, we will say that $\mathcal T$ is \emph{uniformly noncritical}. For evolution families arising from nonautonomous ordinary differential equations, this definition coincides with~\cite[Definition 4]{Palmer}. This notion was introduced for nonlinear systems by  Krasovski~\cite{K} and later adapted to linear systems by  Massera and  Sch\"{a}ffer~\cite{MS}. 
\end{remark}

The following result follows easily from the previous definition.
\begin{proposition}\label{P4}
Let $\mathcal T=\{T(t, s): \ t\ge s>a_0\}\subset \mathcal B(X)$ be an invertible evolution family and $h\colon (a_0, \infty)\to (0, \infty)$ be a growth rate. Then $\mathcal T$ is uniformly $h$-noncritical on $[a_0^*, \infty)$, $a_0^*>a_0$ if and only if $\mathcal T^h$ is uniformly noncritical on $[\ln h(a_0^*), \infty)$.
\end{proposition}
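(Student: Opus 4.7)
The plan is to run essentially the same change of variables as in Propositions~\ref{P2} and~\ref{P3}, namely substitute $\tau = \ln h(t)$, $\sigma = \ln h(u)$ in the defining inequality \eqref{huc}. Since the noncriticality condition for $\mathcal T^h$ is formulated with respect to the growth rate $k(\tau)=e^\tau$, the window $|\ln k(\sigma)-\ln k(\tau)|\le C$ reduces to the symmetric window $|\sigma-\tau|\le C$, which matches the window $|\ln h(u)-\ln h(t)|\le C$ under the substitution. So the proposition should really be a bookkeeping exercise.

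For the forward implication, I would assume that $\mathcal T$ is uniformly $h$-noncritical on $[a_0^*,\infty)$ with constants $\theta\in(0,1)$ and $C>0$. Fix $\tau \ge \ln h(a_0^*)+C$ and $v\in X$, and set $t:=h^{-1}(e^\tau)$. Since $h(t)=e^\tau\ge e^C h(a_0^*)$, the assumption gives
\[
\|v\|\le \theta\sup\{\|T(u,t)v\|:\ |\ln h(u)-\ln h(t)|\le C\}.
\]
Parametrizing $u=h^{-1}(e^\sigma)$, the condition $|\ln h(u)-\ln h(t)|\le C$ becomes $|\sigma-\tau|\le C$, and by definition of $T_h$ we have $T(u,t)=T_h(\sigma,\tau)$. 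Substituting yields
\[
\|v\|\le \theta\sup\{\|T_h(\sigma,\tau)v\|:\ |\sigma-\tau|\le C\},
\]
which is exactly uniform noncriticality of $\mathcal T^h$ on $[\ln h(a_0^*),\infty)$ with the same constants $\theta$ and $C$ (note that $e^\tau \ge e^C\cdot e^{\ln h(a_0^*)}$ is precisely the threshold required for the rate $k(\tau)=e^\tau$).

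The converse follows in exactly the same way, reading the identity \eqref{identity} in the opposite direction: assuming $\mathcal T^h$ is uniformly noncritical on $[\ln h(a_0^*),\infty)$ with constants $\theta, C$, fix $t$ with $h(t)\ge e^C h(a_0^*)$ and put $\tau:=\ln h(t)\ge \ln h(a_0^*)+C$; the hypothesis applied to $v$ and $\tau$ together with $T_h(\sigma,\tau)=T(h^{-1}(e^\sigma),t)$ translates back to \eqref{huc}. The only minor thing to watch is that the supremum set is the same in both directions because $\sigma\mapsto h^{-1}(e^\sigma)$ is a bijection between $\mathbb R$ and $(a_0,\infty)$ and the constraint sets correspond under this bijection; no genuine obstacle arises, and no integrability or continuity hypothesis on $h$ beyond being a bijective increasing map is needed.
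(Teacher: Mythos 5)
Your proof is correct and is exactly the substitution $\tau=\ln h(t)$, $\sigma=\ln h(u)$ that the paper has in mind when it states that Proposition~\ref{P4} ``follows easily from the previous definition'' (the paper omits the details entirely). You also correctly note the one point worth checking, namely that the constraint sets correspond bijectively under $\sigma\mapsto h^{-1}(e^\sigma)$, so the suprema agree and the constants $\theta$, $C$ carry over unchanged.
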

\begin{proof}
Assume that $\mathcal T$ is uniformly $h$-noncritical and let $\theta \in (0, 1)$ and $C>0$ be as in the previous definition.
Note that 
\[
\begin{split}
&\sup\{\|T_h(u, t)v\|: \ |u-t|\le C\}\\
&=\sup\{\|T(h^{-1}(e^u), h^{-1}(e^t))v\|: \ |\ln h(h^{-1}(e^u))-\ln h(h^{-1}(e^t))|\le C\}.
\end{split}
\]
Consequently, for $t\ge C+\ln h(a_0^*)$ (so that $h(h^{-1}(e^t))\ge e^Ch(a_0^*))$ and $v\in X$ we have 
\[
\begin{split}
\|v\| &\le \theta \sup \{\|T(s,  h^{-1}(e^t))v\|: \ |\ln h(s)-\ln h(h^{-1}(e^t))|\le C\}\\
&=\theta \sup\{\|T(h^{-1}(e^u), h^{-1}(e^t))\|: \ |\ln h(h^{-1}(e^u))-\ln h(h^{-1}(e^t))|\le C\}\\
&=\theta \sup\{\|T_h(u, t)v\|: \ |u-t|\le C\},
\end{split}
\]
where in the second step, we made the change of variables $u=\ln h(s)$.
We conclude that $\mathcal T^h$ is uniformly noncritical on $[\ln h(a_0^*), \infty)$. The converse can be established similarly. 
\end{proof}
\section{Main results}\label{s-mr}
\subsection{Characterization of exponential dichotomies}
The following is the first main result of our paper. Its proof is inspired by the proof of~\cite[Proposition 1, p.14]{Coppel}.
\begin{theorem}\label{MT-1}
 Let $\mathcal T=\{T(t, s): \ t\ge s> a_0\}\subset \mathcal B(X)$ be an invertible evolution family and $a_0^*>a_0$ such that the following holds:
\begin{itemize}
\item $\mathcal T$ exhibits bounded growth and decay on $[a_0^*, \infty)$;
\item there exists a finite-dimensional subspace $Z\subset X$ such that 
\begin{equation}\label{split}
X=\mathcal S\oplus Z,
\end{equation}
where 
\begin{equation}\label{mathS}
\mathcal S:=\left \{v\in X: \ \sup_{t\ge a_0^*}\|T(t, a_0^*)v\|<+\infty \right \}. 
\end{equation}
\end{itemize}

Then the following statements are equivalent:
\begin{enumerate}
\item[(a)] $\mathcal T$ admits an exponential dichotomy on $[a_0^*, \infty)$;
\item[(b)] $\mathcal T$ is exponentially expansive on $[a_0^*, \infty)$;
\item[(c)] $\mathcal T$ is uniformly noncritical on $[a_0^*, \infty)$.
\end{enumerate}

\end{theorem}

\begin{proof}
$(a)\implies (b)$ Let $P(t)$, $t\ge a_0^*$ and $D, \lambda>0$ be as in Definition~\ref{dich} (for $h(t)=e^t$). Then, for every $v\in X$ and $[a, b]\subset [a_0^*, \infty)$ we have
\[
v=P(t)v+(\Id-P(t))v=T(t, a)P(a)T(a, t)v+T(t, b)(\Id- P(b))T(b, t)v,
\]
for $t\in [a, b]$. Consequently, by~\eqref{hd1} and~\eqref{hd2} we have
\[
\begin{split}
\|v\| &\le \|T(t, a)P(a)T(a, t)v\|+\|T(t, b)(\Id- P(b))T(b, t)v\|\\
&\le De^{-\lambda (t-a)}\|T(a, t)v\|+De^{-\lambda (b-t)}\|T(b, t)v\|,
\end{split}
\]
for $t\in [a, b]$. Hence, \eqref{hexp} holds with $\beta=\lambda$, $L=D$ and $h(t)=e^t$. We conclude that $\mathcal T$ is exponentially expansive on $[a_0^*, \infty)$.

$(b)\implies (c)$ Suppose that $\mathcal T$ is exponentially expansive on $[a_0^*, \infty)$ and let $L, \beta>0$ be such that~\eqref{hexp} holds with $h(t)=e^t$.  Choose $C>0$ sufficiently large so that $\theta:=2Le^{-\beta C}<1$. For any $v\in X$ and $t$ such that $t\ge a_0^*+C$, by~\eqref{hexp} we have
\[
\begin{split}
\|v\|&\le L(e^{-\beta C}\|T(t-C, t)v\|+e^{-\beta C}\|T(t+C, t)v\|)\\
&\le \theta \sup \{\|T(u, t)v\|: \ |u-t|\le C\},
\end{split}
\]
as $t\in [t-C, t+C]\subset [a_0^*, \infty)$. We conclude that $\mathcal T$ is uniformly noncritical on $[a_0^*, \infty)$.

$(c)\implies (a)$  Suppose that $\mathcal T$ is uniformly noncritical on $[a_0^*, \infty)$, and let $C>0$ and $\theta \in (0, 1)$ be such that 
\begin{equation}\label{912}
 \|v\|\le \theta \sup \{\|T(u, t)v\|: \ |u-t|\le C\},  \quad \text{for $v\in X$ and $t\ge a_0^*+C$.} 
\end{equation}
Moreover, let $K, \mu>0$ be such that~\eqref{hbg} and~\eqref{hbd} hold with $h(t)=e^t$.

 Let
\[
\mathcal S(s):=\left \{v\in X: \ \sup_{t\ge s}\|T(t, s)v\|<+\infty \right \}, \quad s\ge a_0^*.
\]
Since $T(s, a_0^*)\mathcal S=\mathcal S(s)$, by~\eqref{split} we have 
\begin{equation}\label{split2}
X=\mathcal S(s)\oplus Z(s), \quad \text{for $s\ge a_0^*$, where $Z(s):=T(s, a_0^*)Z$.}
\end{equation}
Here $T(s, a_0^*)Z$ denotes the image of the subspace $Z$ under the action of $T(s, a_0^*)$. Note that $\mathcal S(a_0^*)=\mathcal S$ and $Z(a_0^*)=Z$.
Fix $s\ge a_0^*$ and $v\in  \mathcal S(s)\setminus \{0\}$. Then
\[
0<\varrho:=\sup_{t\ge s}\|T(t, s)v\|<+\infty. 
\]
For $t\ge s+C$, it follows from~\eqref{912} that 
\[
\begin{split}
\|T(t, s)v\| &\le \theta \sup \{\|T(u, t)T(t, s)v\|: \ |u-t|\le C\} \\
&\le \theta \sup \{\|T(u, s)v\|: \ u\ge s\}=\theta \varrho,
\end{split}
\] 
yielding
\[
\sup_{t\ge s+C}\|T(t, s)v\| \le \theta \varrho < \varrho,
\]
since $\theta \in (0, 1)$.
Therefore, \[ \varrho=\sup_{t\in [s, s+C]}\|T(t, s)v\|, \] and, consequently, $\varrho \le Ke^{\mu C}\|v\|$ (where we used~\eqref{hbg} with $h(t)=e^t$).  We conclude that 
\begin{equation}\label{924}
\|T(t, s)v\| \le D\|v\| \quad \text{for $t\ge s$ and $v\in \mathcal S(s)$,}
\end{equation}
where $D\ge 1$ is independent of $s$ and $v$.  As in the proof of~\cite[Proposition 1, p.15]{Coppel}, \eqref{924} (together with~\eqref{912}) implies that
\begin{equation}\label{ed1}
\|T(t, s)v\|\le Be^{-\alpha (t-s)}\|v\| \quad \text{for $t\ge s\ge a_0^*$ and $v\in \mathcal S(s)$,}
\end{equation}
where $B:=\theta^{-1}D$ and $\alpha:=-C^{-1}\ln \theta$. Indeed, take $v\in \mathcal S(s)$,  $t\ge s\ge a_0^*$ and let $n\in \mathbb N_0$ be such that $s+nC\le t<s+(n+1)C$. By~\eqref{912}, 
\begin{equation}\label{aux1}
\begin{split}
\|T(t, s)v\| &\le \theta \sup \{\|T(u, t)T(t, s)v\|: \ |u-t|\le C\}\\
&=\theta \sup \{\|T(u, s)v\|: \ |u-t| \le C\}.
\end{split}
\end{equation}
For $u \in [t-C, t+C]$, we have (using~\eqref{912} again) that 
\begin{equation}\label{aux2}
\|T(u, s)v\| \le \theta \sup \{ \|T(r, s)v\|: \ |r-u| \le C\}
\end{equation}
Noting that $r\in [t-2C, t+2C]$ for each $r$ with $|r-u| \le C$, we obtain from~\eqref{aux1} and~\eqref{aux2} that 
\[
\|T(t, s)v\| \le \theta^2 \sup \{\|T(u, s)v\|: \ |u-t| \le 2C\}.
\]
Iterating,
\[
\|T(t, s)v\| \le \theta^n \sup \{\|T(u, s)v\|: \ |u-t| \le nC\}\le D\theta^n \|v\|, 
\]
where in the last inequality, we used~\eqref{924}. This easily implies~\eqref{ed1}.

For $v\in Z$ with $\|v\|=1$, the map $t\mapsto \|T(t, a_0^*)v\|$ is unbounded (as otherwise we would have $v\in\mathcal S\cap Z =\{0\}$) and, consequently, there is a least value $t_1=t_1(v)>a_0^*$ such that $\|T(t_1, a_0^*)v\|=\theta^{-1}D$. We claim that $v\mapsto t_1(v)$ is bounded. Otherwise, there is a sequence $(v_n)_n\subset Z$ with $\|v_n\|=1$ such that $t_1(v_n)\to \infty$. By the  compactness of the unit sphere in $Z$ (here we use that $Z$ is finite-dimensional), we can assume without loss of generality that $v_n\to v$, where $v\in Z$ and $\|v\|=1$.  Since $T(t, a_0^*)\in \mathcal B(X)$, we have 
\[
T(t, a_0^*)v_n\to T(t, a_0^*)v \quad \text{for $t\ge a_0^*$.}
\]
As 
\[
\|T(t, a_0^*)v_n\| <\theta^{-1}D \quad \text{for $n\in \mathbb N$ and $a_0^*\le t<t_1(v_n)$,}
\]
we have
\[
\|T(t, a_0^*)v\| = \lim_{n\to \infty}\|T(t, a_0^*)v_n\| \le \theta^{-1}D, \quad t\ge a_0^*.\]
This implies that the map $t\mapsto \|T(t, a_0^*)v\|$ is bounded, which yields a contradiction. This proves our claim. Let $t_1:=\sup_{v\in Z, \|v\|=1}t_1(v) \in (a_0^*, \infty)$.

As in~\cite[p.15]{Coppel} we have 
\[
\|T(t, a_0^*)v\|\le Be^{-\alpha (s-t)}\|T(s, a_0^*)v\|, \quad \text{for $v\in Z$ and $t_1\le t\le s$.}
\]
Hence,
\begin{equation}\label{ed2}
    \|T(t,s)v\|\le Be^{-\alpha (s-t)}\|v\|, \quad \text{for $v\in Z(s)$ and $t_1\le t\le s$.}
\end{equation}
On the other hand, for $a_0^*\le t\le s\le t_1$ we have from~\eqref{hbd} that
\begin{equation}\label{1517}
\|T(t, s)v\|\le Ke^{\mu (s-t)}\|v\| \le Ke^{(\mu+\alpha)(t_1-  a_0^*)}e^{-\alpha (s-t)}\|v\|,
\end{equation}
for $v\in Z(s)$ as
\[
e^{\mu (s-t)}=e^{(\mu+\alpha) (s-t)}e^{-\alpha(s-t)}\le e^{(\mu+\alpha)(t_1-  a_0^*)}e^{-\alpha (s-t)}.
\]
Finally, for $a_0^*\le t\le t_1\le s$ using~\eqref{hbd} and~\eqref{ed2} we have
\begin{equation}\label{1518}
\begin{split}
    \|T(t, s)v\|=\|T(t, t_1)T(t_1, s)v\|&\le Ke^{\mu (t_1-t)}\|T(t_1, s)v\| \\
    &\le KBe^{\mu (t_1-t)}e^{-\alpha (s-t_1)}\|v\|\\
    &= KBe^{(\mu+\alpha)(t_1-t)}e^{-\alpha (s-t)}\|v\| \\
    &\le KBe^{(\mu+\alpha)(t_1-a_0^*)}e^{-\alpha (s-t)}\|v\|,
    \end{split}
\end{equation}
for $v\in Z(s)$. From~\eqref{ed2}, \eqref{1517} and~\eqref{1518} we conclude that there exists $\tilde B>0$ such that
\begin{equation}\label{1519}
     \|T(t,s)v\|\le \tilde Be^{-\alpha (s-t)}\|v\|, \quad \text{for $v\in Z(s)$ and $a_0^*\le t\le s$.}
\end{equation}
It follows from~\eqref{hbg}, \eqref{ed1} and~\eqref{1518} (together with~\cite[Lemma 4.2]{MRS}) that $\mathcal T$ admits an exponential dichotomy on $[a_0^*, \infty)$ with respect to projections $P(t)\colon X\to \mathcal S(t)$ associated to~\eqref{split2}.
\end{proof}

\begin{remark}\label{sevremarks}
\begin{enumerate}
\item When $X$ is finite-dimensional, the assumption that there is a finite-dimensional subspace $Z\subset X$ satisfying~\eqref{split} is automatically satisfied.
\item When the evolution family $T(t, s)$ is associated with an ordinary differential equation \[x'=A(t)x, \] where $A\colon (a_0, \infty)\to \mathcal B(X)$ is continuous, we can remove the assumption that $T(t, s)$ exhibits a bounded decay (see the argument in~\cite[p.13]{Coppel}).
\end{enumerate}
\end{remark}

The following example shows that the assumption that $\mathcal S$ has a finite-dimensional complement cannot be omitted in the statement of Theorem~\ref{MT-1} (even when $X$ is a Hilbert space).

\begin{example}\label{EX1}
Let $H=L^2(\mathbb R)$ be the Hilbert space consisting of all square-integrable functions $f\colon \mathbb R\to \mathbb C$ with respect to the Lebesgue measure on $\mathbb R$. For $t\in \mathbb R$, let $S(t)\colon H\to H$ be given by 
\[
(S(t)f)(x):=f(x-t), \quad x\in \mathbb R, \ f\in H.
\]
Then $S(t)$ is a linear isometry on $H$ for each $t\in \mathbb R$. Note that
\[
    S(0)=\Id \quad \text{and} \quad S(t+s)=S(t)S(s) \quad \text{for $t, s\in \mathbb R$.}
\]

Moreover, set $Y:=H\oplus H$  (which is also a Hilbert space).
For $t\in \mathbb R$, we define $T(t)\colon Y\to Y$  by 
\[
T(t)(f, g)=(e^{-t}S(t)f, e^t S(t)g), \quad (f, g)\in Y.
\]
Clearly, $T(t)$ is a bounded linear operator on $Y$.
Let $\mathcal T=\{T(t, s): \ t, s\in \mathbb R\}$ be an invertible evolution family on $Y$ defined by
\[
T(t, s):=T(t-s), \quad t, s\in \mathbb R.
\]
We claim that $\mathcal T$ admits an exponential dichotomy. To this end, we consider a projection $P\colon Y\to Y$ given by 
\[
P(f, g)=(f, 0), \quad (f, g)\in Y,
\]
and set $P(t):=P$ for $t\in \mathbb R$. Clearly, \eqref{invariance} holds. Moreover, 
\[
\begin{split}
\|T(t, s)P(s)(f, g)\|=\|T(t-s)(f, 0)\| &=e^{-(t-s)}\|S(t-s)f\|\\
&=e^{-(t-s)}\|f\|\\
&\le e^{-(t-s)}\|(f, g)\|,
\end{split}
\]
for $t\ge s$ and $(f, g)\in Y$. Similarly, 
\[
\|T(t, s)(\Id-P(s))(f, g)\|\le e^{-(s-t)}\|(f, g)\|,
\]
for $t\le s$ and $(f, g)\in Y$. We conclude that $\mathcal T$ admits an exponential dichotomy.

Next, we consider the operator $M\colon \mathcal D(M)\to L^2(\mathbb R)$ defined by 
\[
(Mf)(x)=e^{2x}f(x) \quad x\in \mathbb R,
\]
defined on the domain $\mathcal D(M)$ consisting of all $f\in L^2(\mathbb R)$ such that $Mf\in L^2(\mathbb R)$. Notice that $\mathcal D(M)$ is dense in $L^2(\mathbb R)$ as it contains all continuous functions with bounded support. We observe that $M$ is a closed operator. Indeed, if $(f_n)_n$ is a sequence in $\mathcal D(M)$ such that $f_n\to f$ and $Mf_n \to g$ in $L^2(\mathbb R)$, then we can find a subsequence $(f_{n_k})_k$ of $(f_n)_n$ such that $f_{n_k}\to f$ and $Mf_{n_k}\to g$ almost everywhere. This easily implies that $e^{2x}f(x)=g(x)$ for a.e. $x\in \mathbb  R$. Consequently, $f\in \mathcal D(M)$ and $Mf=g$.

We take $X$ to the graph of $M$ (see~\cite[p.91]{Conway}): 
\[
X=\{(f, Mf): \ f\in \mathcal D(M)\}.
\]
Since $M$ is closed, we have that $X$ is a closed subspace of $Y$. Take $t\in \mathbb R$ and $(f, Mf)\in X$ and define $h:=e^{-t}S(t)f$. Note that 
\[
(Mh)(x)=e^{2x}h(x)=e^{2x}e^{-t}(S(t)f)(x)=e^{2x}e^{-t}f(x-t),
\]
for $x\in \mathbb R$. Since $f\in \mathcal D(M)$, one can easily see that $Mh\in L^2(\mathbb R)$ and $h\in \mathcal D(M)$. Moreover, 
\[
e^t (S(t)Mf)(x)=e^t e^{2(x-t)}f(x-t)=e^{-t}e^{2x}f(x-t)=Mh(x),
\]
for $x\in \mathbb R$.
Hence,
\[
T(t)(f, Mf)=(e^{-t}S(t)f, e^t S(t)Mf)=(h, Mh).
\]
This implies that $X$ is $T(t, s)$-invariant for arbitrary $t, s\in \mathbb R$. Moreover, 
\begin{equation}\label{436n}
\begin{split}
\|T(t)(f, Mf)\|^2 &=\|( h, M  h)\|^2\\
&=\|h\|^2+\|M h\|^2 \\
&=e^{-2t}\|S(t)f\|^2+e^{2t}\|S(t)Mf\|^2\\
&=e^{-2t}\|f\|^2+e^{2t}\|Mf\|^2,
\end{split}
\end{equation}
for arbitrary $t\in \mathbb R$.
Note that for $(f, Mf)\in X\setminus \{0\}$ we have that $f\neq 0$ and $Mf\neq 0$. Consequently, \eqref{436n} implies that
\begin{equation}\label{458n}
\lim_{t\to \infty}\|T(t)(f, Mf)\|=\infty \quad \text{and} \quad \lim_{t\to -\infty}\|T(t)(f, Mf)\|=\infty,
\end{equation}
for each nonzero $(f, Mf)\in X$. This implies that an invertible evolution family $\bar{\mathcal T}=\{\bar T(t, s): \ t, s\in \mathbb R\}$ on $X$ defined by 
\[
\bar T(t, s):=T(t, s)\rvert_X \quad t, s\in \mathbb R
\]
does not admit an exponential dichotomy on an interval $[0, \infty)$. Indeed, assume that it does admit an exponential dichotomy on $[0, \infty)$ with respect to projections $P(t)$, $t\ge 0$. As
\[
\begin{split}
\Ima P(t) &=\left \{(f, Mf)\in X: \ \sup_{r\ge t}\|\bar T(r, t)(f, Mf)\|<+\infty \right \}\\
&=\left  \{(f, Mf)\in X: \ \sup_{r\ge t}\|T(r-t)(f, Mf)\|<+\infty \right \}\\
&=\left \{(f, Mf)\in X: \sup_{r\ge 0}\|T(r)(f, Mf)\|<+\infty \right\},
\end{split}
\]
we conclude from the first equality in~\eqref{458n} that $P(t)=0$ for each $t\ge 0$. Consequently, we have that there exist $D, \lambda>0$ such that 
\[
\|\bar T(t, s)(f, Mf)\|\ge \frac 1 D e^{\lambda (t-s)}\|(f, Mf)\|,
\]
for each $t\ge s\ge 0$ and $(f, Mf)\in X$. Hence, 
\[
\|T(-r)(f, Mf)\| \le De^{-\lambda r}\|(f, Mf)\|
\]
for each $r\ge 0$ and $(f, Mf)\in X$, which contradicts the second equality in~\eqref{458n}.

Since $\mathcal T$ admits an exponential dichotomy, it follows from the proof of Theorem~\ref{MT-1} that $\mathcal T$ is uniformly noncritical on $[0, \infty)$. Thus, there are $C>0$ and $\theta \in (0, 1)$ such that 
\[
\|(f, g)\| \le \theta \sup \{\|T(u, t)(f, g)\|: \ |u-t|\le C\}, \ \text{for $(f, g)\in Y$ and $t\ge C$.}
\]
In particular, 
\[
\|(f, Mf)\| \le \theta \sup \{\|T(u, t)(f, Mf)\|: \ |u-t|\le C\}, 
\]
for $(f, Mf)\in X$ and $t\ge C$, 
yielding 
\[
\|(f, Mf)\| \le \theta \sup \{\|\bar T(u, t)(f, Mf)\|: \ |u-t|\le C\},
\]
for $(f, Mf)\in X$ and $t\ge C$. Therefore, $\bar {\mathcal T}$ is uniformly noncritical on $[0, \infty)$. We conclude that $\bar{\mathcal T} $ is uniformly noncritical on $[0, \infty)$ although it does not admit an exponential dichotomy on $[0, \infty)$.

We finally observe that it follows from~\eqref{458n} that $\mathcal S$ in~\eqref{mathS} (with $a_0^*=0$) corresponding to $\bar{\mathcal T}$ consist only of a zero vector, which means that~\eqref{split} holds with $Z=X$. Therefore, we conclude that in the statement of Theorem~\ref{MT-1} we cannot relax the assumption that $Z$ is finite-dimensional by requiring only that $Z$ is closed.

\end{example}
\subsection{Characterization of $h$-dichotomies}
Now we obtain the version of Theorem~\ref{MT-1} for $h$-dichotomies. Its proof is a simple consequence of Theorem~\ref{MT-1} and the other results of this paper.
\begin{theorem}\label{MT2}
 Let $\mathcal T=\{T(t, s): \ t\ge s> a_0\}\subset \mathcal B(X)$ be an invertible evolution family, $h\colon (a_0, \infty)\to (0, \infty)$ a growth rate and $a_0^*>a_0$ such that the following holds:
\begin{itemize}
\item $\mathcal T$ exhibits $h$-bounded growth and decay on $[a_0^*, \infty)$;
\item there exists a finite-dimensional subspace $Z\subset X$ such that
\[
X=\mathcal S \oplus Z,
\]
where 
\[
\mathcal S:=\left \{v\in X: \ \sup_{t\ge a_0^*}\|T(t, a_0^*)v\|<+\infty \right \}. 
\]
\end{itemize}

Then the following statements are equivalent:
\begin{enumerate}
\item[(a)] $\mathcal T$ admits an $h$-dichotomy on $[a_0^*, \infty)$;
\item[(b)] $\mathcal T$ is $h$-expansive on $[a_0^*, \infty)$;
\item[(c)] $\mathcal T$ is uniformly $h$-noncritical on $[a_0^*, \infty)$.
\end{enumerate}

\end{theorem}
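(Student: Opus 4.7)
The plan is to reduce Theorem~\ref{MT2} to Theorem~\ref{MT-1} by applying the time-rescaling machinery of Section~\ref{s-r} and Section~\ref{s-nn} to the evolution family $\mathcal T^h$. Concretely, set $b_0:=\ln h(a_0^*)$ and consider $\mathcal T^h$ on the interval $[b_0,\infty)$. By Proposition~\ref{P2}(a) together with Remark~\ref{decay}, the assumption that $\mathcal T$ has $h$-bounded growth and decay on $[a_0^*,\infty)$ is equivalent to $\mathcal T^h$ having bounded growth and decay on $[b_0,\infty)$, so the first hypothesis of Theorem~\ref{MT-1} is satisfied for $\mathcal T^h$.

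Next I would verify the splitting hypothesis. Using the identity $T_h(t,b_0)=T(h^{-1}(e^t),a_0^*)$ and the fact that $t\mapsto h^{-1}(e^t)$ is a bijection of $[b_0,\infty)$ onto $[a_0^*,\infty)$, it is immediate that the stable set
\[
\tilde{\mathcal S}:=\left\{v\in X:\ \sup_{t\ge b_0}\|T_h(t,b_0)v\|<+\infty\right\}
\]
coincides with $\mathcal S$. Hence the same closed subspace $Z\subset X$ that complements $\mathcal S$ in the hypothesis also yields $X=\tilde{\mathcal S}\oplus Z$, and since $X$ is reflexive the second hypothesis of Theorem~\ref{MT-1} is satisfied for $\mathcal T^h$.

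Applying Theorem~\ref{MT-1} to $\mathcal T^h$ on $[b_0,\infty)$ then yields the equivalence of: $\mathcal T^h$ admits an exponential dichotomy on $[b_0,\infty)$; $\mathcal T^h$ is exponentially expansive on $[b_0,\infty)$; $\mathcal T^h$ is uniformly noncritical on $[b_0,\infty)$. Translating each of these three statements back to $\mathcal T$ via Proposition~\ref{P2}(b), Proposition~\ref{P3}, and Proposition~\ref{P4}, respectively, gives precisely the equivalence of (a), (b), and (c) in the statement of Theorem~\ref{MT2}.

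I do not anticipate any real obstacle: every step is either a direct invocation of a previously established proposition or the trivial observation that $T_h(\,\cdot\,,b_0)$ and $T(\,\cdot\,,a_0^*)$ generate the same stable set. The only minor care needed is to match the endpoints carefully (the choice $b_0=\ln h(a_0^*)$) and to note that the closed subspace $Z$ does not need to be adjusted when passing from $\mathcal T$ to $\mathcal T^h$, which is exactly what makes the reduction clean.
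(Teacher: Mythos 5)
Your proposal is correct and follows exactly the paper's own argument: apply Theorem~\ref{MT-1} to $\mathcal T^h$ on $[\ln h(a_0^*),\infty)$, using Proposition~\ref{P2}(a) with Remark~\ref{decay} for bounded growth and decay, the identity of the stable sets of $\mathcal T$ and $\mathcal T^h$ for the splitting hypothesis, and Propositions~\ref{P2}(b), \ref{P3}, \ref{P4} to translate the three conditions back. No differences worth noting.
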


\begin{proof}
The desired conclusion follows directly from Theorem~\ref{MT-1} by noting the following:
\begin{itemize}
\item $\mathcal T^h$ exhibits bounded growth and decay on $[\ln h(a_0^*), \infty)$ (see Proposition~\ref{P2}(a) and Remark~\ref{decay});
\item $\mathcal T$ admits $h$-dichotomy on $[a_0^*, \infty)$ if and only if $\mathcal T^h$ admits exponential dichotomy on $[\ln h(a_0^*), \infty)$ (see Proposition~\ref{P2}(b));
\item $\mathcal T$ is $h$-expansive on $[a_0^*, \infty)$ if and only if $\mathcal T^h$ is exponentially expansive on $[\ln h(a_0^*), \infty)$ (see Proposition~\ref{P3});
\item $\mathcal T$ is uniformly $h$-noncritical on $[a_0^*, \infty)$ if and only if $\mathcal T^h$ is uniformly noncritical on $[\ln h(a_0^*), \infty)$ (see Proposition~\ref{P4});
\item 
\[
\mathcal S=\left \{v\in X: \ \sup_{t\ge \ln h(a_0^*)}\|T_h(t, \ln h(a_0^*))v\|<+\infty \right \}.
\]
\end{itemize}

\end{proof}

\section*{Acknowledgements}
It is a pleasure to thank all three referees for their careful reading of my paper and for their useful comments. I am also deeply grateful to Prof. M. Pituk for suggesting  Example~\ref{EX1}.

\vspace{7mm}

\noindent {\bf  Funding}  
This paper has been funded by European Union – NextGenerationEU-Statistical properties of random dynamical systems and other contributions to mathematical analysis and probability theory.

\vspace{3mm}

\noindent{\bf Data availability} This  manuscript has no associated data.

\section*{Declarations}

\noindent{\bf Conflict of interest}  The author declares that he has no known competing financial interests or personal
relationships that could have appeared to influence the work in this paper.


\begin{thebibliography}{1}
\bibitem{CL}
C. Chicone and Y. Latushkin, Evolution semigroups in dynamical systems and differential equations, Math. Surveys Monogr., 70
American Mathematical Society, Providence, RI, 1999. x+361 pp.

\bibitem{Conway}
J. B. Conway, \emph{A Course in Functional Analysis}, Grad. Texts in Math., \textbf{96} Springer-Verlag, New York, 1990. xvi+399 pp.



\bibitem{Coppel} 
W. Coppel, Dichotomies in Stability Theory, Lect. Notes in Math. 629, Springer, 1978.

\bibitem{DK} 
Ju. Dalec$'$ki\u{\i} and  M. Kre\u\i{n}, Stability of Solutions of Differential Equations in Banach Space, Translations of Mathematical Monographs 43, Amer. Math.
Soc., 1974.

\bibitem{DLP}
 D. Dragi\v cevi\' c, N. Jur\v cevi\' c Pe\v cek and N. Lupa, \emph{Admissibility and general dichotomies
for evolution families}, Electron. J. Qual. Theory Differ. Equ. 2020, No. 58, 1–19.


\bibitem{DS}
D. Dragi\v cevi\' c and C. M. Silva, \emph{Generalized dichotomies via time rescaling}, Discrete Contin. Dyn. Syst. Ser. S \textbf{18} (2025), 3917--3944.


\bibitem{DSS}
D. Dragi\v cevi\' c, A. L. Sasu and B. Sasu, \emph{On polynomial dichotomies of discrete nonautonomous systems on the half-line}, Carpathian J. Math. \textbf{38}  (2022), 663--680.




\bibitem{EPR}
H. Elorreaga, J. F. Pe$\tilde{\text{n}}$a and G. Robledo, \emph{Noncritical uniformity and expansiveness for uniform $h$-dichotomies}, Math. Ann. \textbf{393} (2025), 1769--1795.

\bibitem{Henry} 
D. Henry, Geometric Theory of Semilinear Parabolic Equations, Lecture Notes
in Mathematics 840, Springer-Verlag, Berlin-New York, 1981.

\bibitem{K}
N. N. Krasovski, \emph{On the theory of the second method of A.M. Lyapunov for the investigation of
stability},  Mat. Sb. (N.S.) \textbf{82} (1956), 57--64. 

\bibitem{Martin}
R. H. Martin, Jr. \emph{Conditional stability and separation 
of solutions to differential equations}, J. Differential Equations \textbf{13} (1973), 81--105.

\bibitem{MS}
J. L. Massera and J. J. Sch\"{a}ffer, \emph{Linear Differential Equations and Functional Analysis III. Lyapunov’s
Second Method in the Case of Conditional Stability},  Ann. Math. \textbf{69} (1959),  535--574.


\bibitem{MRS}
N. Van Minh, F. R\"{a}binger and R. Schnaubelt, \emph{Exponential stability, exponential expansiveness, and exponential dichotomy of evolution equations on the half-line}, Integral Equations Operator Theory \textbf{32} (1998),  332--353.

\bibitem{M}
J. S. Muldowney, \emph{Dichotomies and asymptotic behaviour for linear differential systems}, Trans. Amer. Math. Soc. \textbf{283} (1984), 465--484.

\bibitem{NP}
R. Naulin and M. Pinto, \emph{Roughness of $(h,k)$-dichotomies}, J. Differ. Equ. \textbf{118} (1995), 20--35.

\bibitem{Palmer}
K. J. Palmer, \emph{Exponential dichotomy and expansivity}, Annali di Matematica \textbf{185} (2006), S171--S185.

\bibitem{PV}
J. F. Pe$\tilde{\text{n}}$a and S. Rivera Villagran, 
\emph{On uniform asymptotic $h$-stability for linear non autonomous systems}, Electron. J. Qual. Theory Differ. Equ. 2025, No. 49, 1-13.

\bibitem{Per}
O. Perron, \emph{Die stabilit\"atsfrage bei diﬀerentialgleichungen}, Math. Z. \textbf{32} (1930), 703–728.

\bibitem{Pot} C. P\"otzsche, Geometric theory of discrete nonautonomous dynamical systems, Springer, 2010.

\bibitem{SS}
A. L. Sasu and B. Sasu, \emph{Exponential Dichotomy on the Real Line and Admissibility of Function Spaces}, Integr. Equa. Oper. Theory \textbf{54} (2006), 113--130.

\bibitem{SY-book-2002}
G. R. Sell and  Y. You,
Dynamics of Evolutionary Equations,
Appl. Math. Sci. 143 Springer-Verlag, New York, 2002.

\bibitem{WXZ}
L. Wang, Y. Xia and N. Zhao, \emph{A characterization of generalized exponential dichotomy}, J. Appl. Anal.
Comput. \textbf{5} (2015),  662--687.
\end{thebibliography}
\end{document}